\newtheorem{lemma}{Lemma}
\newtheorem{theorem}{Theorem}
\DeclareMathOperator {\var}{Var_{[0,1]}}
\newcommand {\Z} {\mathbb{Z}}
\newcommand {\R} {\mathbb{R}}
\newcommand {\Q} {\mathbb{Q}}
\newcommand {\ve} {\varepsilon}
\def\blfootnote{\xdef\@thefnmark{}\@footnotetext}\makeatother
\title{\bf Extremal discrepancy behavior of lacunary sequences}
\author{Christoph Aistleitner} 
\address{Department of Mathematics, Graduate School of Science, Kobe University, Kobe 657-8501, Japan}
\email{aistleitner@math.tugraz.at}
\author{Katusi Fukuyama}
\address{Department of Mathematics, Graduate School of Science, Kobe University, Kobe 657-8501, Japan}
\email{fukuyama@math.kobe-u.ac.jp}
\thanks{The first author is supported by a Schr\"odinger scholarship of the Austrian Research
Foundation (FWF). The second author is supported by KAKENHI 24340017 and 24340020}
\subjclass[2010]{42A55, 60F15, 11K38, 42A32}
\begin{document}

\begin{abstract}
In 1975 Walter Philipp proved the law of the iterated logarithm (LIL) for the discrepancy of lacunary sequences: for any sequence $(n_k)_{k \geq 1}$ satisfying the Hadamard gap condition $n_{k+1} / n_k \geq q > 1,~k \geq 1,$ we have
$$
\frac{1}{4 \sqrt{2}} \leq \limsup_{N \to \infty} \frac{N D_N(\{ n_1 x \}, \dots, \{n_N x\})}{\sqrt{2 N \log \log N}} \leq C_q
$$
for almost all $x$. In recent years there has been significant progress concerning the precise value of the limsup in this LIL for special sequences $(n_k)_{k \geq 1}$ having a ``simple'' number-theoretic structure. However, since the publication of Philipp's paper there has been no progress concerning the lower bound in this LIL for generic lacunary sequences $(n_k)_{k \geq 1}$. The purpose of the present paper is to collect known results concerning this problem, to investigate what the optimal value in the lower bound could be, and for which special sequences $(n_k)_{k \geq 1}$ a small value of the limsup in this LIL can be obtained. We formulate three open problems, which could serve as the main targets for future research.
\end{abstract}

\date{}
\maketitle

\section{Introduction and statement of results}

It is a well-known fact that for quickly increasing $(n_k)_{k \geq 1}$ the systems $(\cos 2 \pi n_k x)_{k \geq 1}$ and $(\sin 2 \pi n_k x)_{k \geq 1}$ show properties which are typical for systems of independent, identically distributed (i.i.d.) random variables. For example, if $(n_k)_{k \geq 1}$ is an increasing sequence of positive integers satisfying the Hadamard gap condition
\begin{equation} \label{had}
\frac{n_{k+1}}{n_k} \geq q > 1, ~k \geq 1, 
\end{equation}
then for all $t \in \R$ we have 
$$
\lambda \left\{ x \in (0,1):~ \sqrt{2} \sum_{k=1}^N \cos 2 \pi n_k x < t \sqrt{N} \right\} \to \Phi(t) \qquad \textrm{ as $N \to \infty$,}
$$
and we have
\begin{equation} \label{coslil}
\limsup_{N \to \infty} \frac{\left| \sum_{k=1}^N \cos 2 \pi n_k x \right|}{\sqrt{2 N \log \log N}} = \frac{1}{\sqrt{2}} \qquad \textup{a.e.}
\end{equation}
The first result, where $\lambda$ denotes the Lebesgue measure and $\Phi$ the standard normal distribution function, is a counterpart of the central limit theorem (CLT); it is due to Salem and Zygmund~\cite{sz}. The second result is a counterpart of the law of the iterated logarithm (LIL), and has been proved by 
Erd\H os and G{\'a}l~\cite{egot}. Both results remain valid if the function $\cos 2\pi x$ is replaced by $\sin 2\pi x$. Generally speaking, the almost-independent behavior of $(\cos 2\pi n_k x)_{k \geq 1}$ and $(\sin 2\pi n_k x)_{k \geq 1}$  breaks down unless very strong growth conditions or number-theoretic conditions are imposed on $(n_k)_{k \geq 1}$. Classical survey papers concerning these topics are~\cite{kac,kahane}; more recent ones are~\cite{ab,fuku3}.\\

A sequence $(x_k)_{k \geq 1}$ of real numbers from the unit interval is called \emph{uniformly distributed modulo one} (u.d. mod 1) if for any subinterval $A = [a,b]$ of the unit interval we have
$$
\lim_{N \to \infty} \frac{1}{N} \sum_{k=1}^N \mathds{1}_A (x_k) = b-a.
$$
The quality of uniform distribution is measured in terms of the so-called \emph{discrepancy}. There exist two classical notions of discrepancies, denoted by $D_N$ and $D_N^*$, which are defined by
$$
D_N (x_1, \dots, x_N) = \sup_{0 \leq a \leq b \leq 1} \left| \frac{1}{N} \sum_{k=1}^N \mathds{1}_{[a,b]} (x_k) - (b-a) \right|
$$
and
$$
D_N^* (x_1, \dots, x_N) = \sup_{0 \leq a \leq 1} \left| \frac{1}{N} \sum_{k=1}^N \mathds{1}_{[0,a]} (x_k) - a \right|,
$$
for points $x_1, \dots, x_N$ in $[0,1]$. $D_N$ is called the \emph{discrepancy}, and $D_N^*$ is called the \emph{star-discrepancy}. It is easily seen that for any points we have $D_N^* \leq  D_N \leq 2D_N^*$, and that an infinite sequence $(x_k)_{k \geq 1}$ is u.d. mod 1 if and only if its discrepancy tends to zero as $N \to \infty$. Koksma's inequality (see e.g.~\cite[Chapter 2, Theorem 5.1]{knu}) states that
\begin{equation} \label{koks}
\left| \int_0^1 f(x)~dx - \frac{1}{N} \sum_{k=1}^N f(x_k) \right| \leq \left(\var f\right) ~D_N^*(x_1, \dots, x_N)
\end{equation}
for any function $f$ of bounded variation on $[0,1]$; the multidimensional generalization of this inequality is the reason why discrepancy theory plays an important role in multidimensional numerical integration, see e.g.~\cite{dipi,dts,knu}.\\

In his seminal paper of 1916, Weyl~\cite{weyl} showed that for any sequence $(n_k)_{k \geq 1}$ of distinct positive integers the sequence $(\{n_k x\})_{k \geq 1}$, where $\{ \cdot \}$ denotes the fractional part function, is u.d. mod 1 for almost all $x$. Determining the precise asymptotic order of the discrepancy of $(\{n_k x\})_{k \geq 1}$ for typical $x$ is a very difficult problem, which is only solved for very few sequences $(n_k)_{k \geq 1}$ (for example when $n_k=k,~k \geq 1$; see \cite{schoi}). However, due to the analogy between lacunary series and sequences of independent random variables described in the first paragraph, very precise metric results for the asymptotic order of the discrepancy of $(\{n_k x\})_{k \geq 1}$ can be obtained if $(n_k)_{k \geq 1}$ is quickly increasing, as we will show below.\\

One could expect that the almost-independence property of lacunary series remains valid if the functions $\cos 2\pi x$ and $\sin 2\pi x$ are replaced by an other ``nice'' 1-periodic function $f$. However, this is only the case in a significantly weakened form, even if $f$ is a trigonometric polynomial. This is most easily seen by considering
$$
f(x) = \cos 2 \pi x - \cos 4 \pi x, \qquad n_k = 2^k, \quad k \geq 1.
$$
In this case the sum $\sum_{k=1}^N f(n_k x)$ is a telescoping sum, and it is obvious that the normalized partial sums will satisfy neither the CLT nor the LIL. Another example, independently due to Erd\H os and Fortet (see~\cite{kac}) is the following: set
$$
f(x) = \cos 2 \pi x + \cos 4 \pi x, \qquad n_k = 2^k-1, \quad k \geq 1.
$$
In this case the CLT fails, while the LIL holds in the modified form
\begin{equation} \label{erdfort}
\limsup_{N \to \infty} \frac{\left| \sum_{k=1}^N f(n_k x) \right|}{\sqrt{2 N \log \log N}} = \sqrt{2} |\cos \pi x| \qquad \textup{a.e.}
\end{equation}
Thus the precise form of the LIL may fail for Hadamard lacunary $(n_k)_{k \geq 1}$ in the case of general $f$. However, in this case by a result of Takahashi~\cite{taka} we still have the following upper-bound version of the LIL: for $(n_k)_{k \geq 1}$ satisfying~\eqref{had} and $f$ satisfying
\begin{equation} \label{f}
f(x+1)=f(x), \qquad \int_0^1 f(x) ~dx = 0, \qquad \var f < \infty,
\end{equation}
we have
\begin{equation} \label{taka}
\limsup_{N \to \infty} \frac{\left| \sum_{k=1}^N f(n_k x) \right|}{\sqrt{2 N \log \log N}} \leq C \qquad \textup{a.e.},
\end{equation}
for some appropriate constant $C$ (depending on $f$ and the growth factor $q$). From a probabilistic point of view, the star-discrepancy and extremal discrepancy are a version of the (one-sided and two-sided, respectively) Kolmogorov-Smirnov statistic. The Chung-Smirnov law of the iterated logarithm implies that for $X_1, X_2,\dots$ being i.i.d. random variables having uniform distribution on $[0,1]$ we have
$$
\limsup_{N \to \infty} \frac{N D_N(X_1, \dots, X_N)}{\sqrt{2 N \log \log N}} = \frac{1}{2} \qquad \textup{a.s.},
$$
and the same result holds if $D_N$ is replaced by $D_N^*$. Erd\H os and G\'al conjectured that an upper-bound version of the Chung--Smirnov LIL should also hold if the sequence of i.i.d. random variables is replaced by the ``almost independent'' system $(\{ n_k x\})_{k \geq 1}$. This was confirmed by Philipp~\cite{plt} in 1975; he showed that for $(n_k)_{k \geq 1}$ satisfying~\eqref{had} we have
\begin{equation} \label{phil}
\limsup_{N \to \infty} \frac{N D_N(\{n_1 x\}, \dots, \{n_N x\})}{\sqrt{2 N \log \log N}} \leq C_q \qquad \textup{a.e.},
\end{equation}
where $C_q$ depends on $q$. Together with Koksma's inequality,~\eqref{phil} implies~\eqref{taka}. On the other hand, by~\eqref{coslil}, Koksma's inequality and the fact that $\var \cos 2 \pi x = 4$ we have
\begin{equation} \label{lowerb}
\limsup_{N \to \infty} \frac{N D_N^*(\{n_1 x\}, \dots, \{n_N x\})}{\sqrt{2 N \log \log N}} \geq \frac{1}{4 \sqrt{2}} \qquad \textup{a.e.}
\end{equation}

In the sequel, for a given sequence $(n_k)_{k \geq 1}$ we set 
$$
\Lambda(x) = \limsup_{N \to \infty} \frac{N D_N(\{n_1 x\}, \dots, \{n_N x\})}{\sqrt{2 N \log \log N}}, \qquad \Lambda^*(x) = \limsup_{N \to \infty} \frac{N D_N^*(\{n_1 x\}, \dots, \{n_N x\})}{\sqrt{2 N \log \log N}}.
$$
In 2008 Fukuyama~\cite{fuku1} calculated the precise value of the limsup in the LIL for the discrepancy of $(\{n_k x\})_{k \geq 1}$ for special sequences of the form $n_k = \theta^k,~ k \geq 1$. Amongst other results, for such sequences he obtained the following:
\begin{equation} \label{lambdafuku}
\Lambda = \Lambda^* = \left\{ \begin{array}{ll} \frac{\sqrt{42}}{9} & \textrm{if $\theta=2$,}\\ \frac{\sqrt{(\theta+1)\theta(\theta-2)}}{2 \sqrt{(\theta-1)^3}} & \textrm{if $\theta \geq 4$ is even,} \\ \frac{\sqrt{\theta+1}}{2 \sqrt{\theta-1}} & \textrm{if $\theta \geq 3$ is odd,} \\ \frac{1}{2} & \textrm{if $\theta^r \not\in \Q$ for all $r \in \Q$}, \end{array} \right.
\end{equation}
for almost all $x$. Such results indicate that there is a close connection between number-theoretic properties of $(n_k)_{k \geq 1}$ and the precise asymptotic order of $\sum f(n_k x)$ and of the discrepancy of $(\{n_k x\})_{k \geq 1}$. It turns out that the number of solutions of certain Diophantine equations plays an important role. For $j_1 \geq 1,j_2 \geq 1,\nu \in \Z$ and $N \geq 1$ we set
\begin{equation} \label{dioph}
S(j_1,j_2,\nu,N) = \# \Big\{(k,l):~(j_1,k) \neq (j_2,l), ~1 \leq k,l \leq N,~j_1 n_k - j_2 n_l = \nu \Big\}.
\end{equation}
For some $d \geq 1$ we say that $(n_k)_{k \geq 1}$ satisfies condition $\mathbf{D}_d$ if for all $1 \leq j_1, j_2 \leq d$ there exist real numbers $\gamma_{j_1,j_2,\nu}$ such that
\begin{equation} \label{dioph2}
\left| \frac{S(j_1,j_2,\nu,N)}{N} - \gamma_{j_1,j_2,\nu} \right| = \mathcal{O} \left( \frac{1}{(\log N)^{1 + \ve}} \right)
\end{equation}
for some fixed $\ve>0$, uniformly in $\nu \in \Z$. Furthermore we say that $(n_k)_{k \geq 1}$ satisfies condition $\mathbf{D}$ if it satisfies $\mathbf{D}_d$ for all $d \geq 1$. Assume that $(n_k)_{k \geq 1}$ satisfies~\eqref{had} and condition $\mathbf{D}$, and that $f$ is a function satisfying~\eqref{f}. Then, writing
$$
f(x) = \sum_{j=1}^\infty \left(a_j \cos 2 \pi j x + b_j \sin 2 \pi j x \right)
$$
and setting
\begin{equation} \label{sigma}
\sigma_f^2 (x) = \|f\|^2 + \sum_{\nu = -\infty}^\infty \sum_{j_1,j_2=1}^\infty \frac{\gamma_{j_1,j_2,\nu}}{2} \Big( \left(a_{j_1} a_{j_2} + b_{j_1} b_{j_2} \right) \cos 2 \pi \nu x + \left(b_{j_1}a_{j_2} - a_{j_1} b_{j_2}\right) \sin 2 \pi \nu x \Big),
\end{equation}
where $\| \cdot \|$ denotes the $L^2(0,1)$ norm, we have
\begin{equation} \label{generalLIL}
\limsup_{N \to \infty} \frac{\left| \sum_{k=1}^N f(n_k x)\right|}{\sqrt{2 N \log \log N}} = \sigma_f(x) \qquad \textup{a.e.}
\end{equation}
(see~\cite[Theorem 2]{aist1}). Writing $\sigma_{[a,b]}$ for the function defined in~\eqref{sigma} for the (centered, extended with period 1) indicator functions
$$
\mathbf{I}_{[a,b]}(x) = \sum_{m \in \Z} \mathds{1}_{[a,b]} (x + m) - (b-a), \qquad \textrm{where $0 \leq b-a < 1$},
$$
then for the discrepancy we have (see~\cite[Theorem 3]{aist1})
\begin{equation} \label{discrv}
\Lambda^*(x) = \sup_{0 \leq a \leq 1} \sigma_{[0,a]}(x) \qquad \textup{a.e.} \quad \textrm{and} \qquad \Lambda(x) = \sup_{0 \leq a \leq b \leq 1} \sigma_{[a,b]}(x) \quad \textup{a.e.}
\end{equation}

In particular if the Diophantine equations in~\eqref{dioph} have a ``small'' number of solutions, namely if $\gamma_{j_1,j_2,\nu}=0$ for all $j_1,j_2,\nu$, then $\Lambda^* = \Lambda = \frac{1}{2}$ for a.e. $x$; that is, in this case we have the same limsup as in the Chung--Smirnov LIL for i.i.d. random variables (see~\cite{aist4}). On the other hand, if the number of solutions of these Diophantine equations is large, then $\Lambda$ and $\Lambda^*$ are in general different from $1/2$, and can even show the ``irregular'' behavior of not being equal to a constant for almost all $x$ (see~\cite{aist2,aist3,fuku2}).\\

Despite considerable efforts to construct lacunary sequences $(n_k)_{k \geq 1}$ with extremal discrepancy behavior, so far no lacunary $(n_k)_{k \geq 1}$ has been found for which
\begin{itemize} 
\item \quad $\Lambda^* < \frac{1}{2}$ on a set of positive measure, or
\item \quad $\|\Lambda^*\|< \frac{1}{2}$, or
\item \quad $\Lambda < \frac{1}{2}$ on a set of positive measure.
\end{itemize}
The purpose of the present paper is to construct a sequence $(n_k)_{k \geq 1}$ for which the first of these three properties holds, and to demonstrate why conventional constructions cannot provide an example of a sequence for which either the second or third property holds. We will also show that the lower bound in~\eqref{lowerb} can be improved if $D_N^*$ is replaced by $D_N$.

\begin{theorem} \label{th1}
Let $(n_k)_{k \geq 1}$ be defined by 
$$
n_k = \left\{ \begin{array}{ll} 3^{k^2} & \textrm{if $k$ is odd} \\ 3^{(k-1)^2+1}-1 & \textrm{if $k$ is even.} \end{array} \right.
$$
Then for almost all $x \in [0,1]$ we have
$$
\Lambda^*(x) = \left\{ \begin{array}{ll} \sqrt{\frac{-3x^2-x+2}{6}} & \textrm{if $0 \leq x \leq \frac{1}{6},$} \\ 
\sqrt{\frac{-24x+25}{72}} & \textrm{if $\frac{1}{6} \leq x \leq \frac{3}{8},$} \\ \sqrt{\frac{2}{9}} & \textrm{if $\frac{3}{8} \leq x \leq \frac{1}{2},$} \\ \Lambda^*(1-x) & \textrm{if $\frac{1}{2} < x \leq 1$.} \end{array} \right.
$$
In particular we have $\Lambda^*(x) < 1/2$ for almost all $x \in (7/24,17/24)$.
\end{theorem}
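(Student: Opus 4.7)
The plan is to apply the characterization $\Lambda^*(x) = \sup_{0 \leq a \leq 1} \sigma_{[0,a]}(x)$ from~\eqref{discrv}, which reduces the theorem to (i) verifying~\eqref{had} and condition~$\mathbf{D}$ for $(n_k)$, (ii) identifying the coefficients $\gamma_{j_1,j_2,\nu}$, (iii) computing $\sigma^2_{[0,a]}(x)$ in closed form, and (iv) maximizing it in $a$ for each fixed $x$.

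For (i) and (ii), the designed feature of the sequence is the exact cancellation
\[
3 n_{2m-1} - n_{2m} = 3^{(2m-1)^2 + 1} - \bigl(3^{(2m-1)^2 + 1} - 1\bigr) = 1, \qquad m \geq 1.
\]
The Hadamard condition holds with $q = 8/3$, since $n_{2m}/n_{2m-1} = 3 - 3^{-(2m-1)^2} \geq 8/3$ and $n_{2m+1}/n_{2m}$ is of order $3^{8m-1}$. Because the exponents grow quadratically, for any fixed $j_1, j_2, \nu$ only consecutive pairs $(k,l) = (2m-1, 2m)$ or $(2m, 2m-1)$ can contribute a positive density of solutions to $j_1 n_k - j_2 n_l = \nu$; substituting the defining relation then forces $(j_1, j_2, \nu)$ to have the form $(3j, j, j)$ or $(j, 3j, -j)$ with $j \geq 1$, each giving exactly one solution per $m$. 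Hence
\[
\gamma_{3j, j, j} = \gamma_{j, 3j, -j} = \tfrac{1}{2}, \qquad \gamma_{j_1, j_2, \nu} = 0 \text{ otherwise,}
\]
with error $O(1/N)$, confirming $\mathbf{D}$.

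For (iii), the centered indicator $\mathbf{I}_{[0,a]}$ has Fourier coefficients satisfying $a_j + i b_j = (2 \sin \pi j a / \pi j) e^{i \pi j a}$. Substituting into~\eqref{sigma}, combining the two complex-conjugate contributions coming from $(3j,j,j)$ and $(j,3j,-j)$, and applying the product-to-sum identity together with the Clausen formula $\sum_{j \geq 1} j^{-2} \cos 2\pi j t = \pi^2 B_2(\{t\})$ (where $B_2(t) = t^2 - t + 1/6$) should collapse the series to the piecewise-quadratic closed form
\[
\sigma^2_{[0,a]}(x) = a(1-a) + \tfrac{1}{6}\bigl( B_2(\{x\}) + B_2(\{x-2a\}) - B_2(\{x+a\}) - B_2(\{x-3a\}) \bigr).
\]

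For (iv), the identity $B_2(1-t) = B_2(t)$ yields $\sigma^2_{[0,a]}(x) = \sigma^2_{[0,a]}(1-x)$, which justifies the $x \leftrightarrow 1-x$ symmetry in the theorem. For $x \in [0, 1/2]$, the three arguments $x + a$, $x - 2a$, $x - 3a$ change integer part at a finite list of breakpoints in $a$, on whose complement $\sigma^2_{[0,a]}(x)$ is an explicit quadratic in $a$. Locating the critical point on each piece and comparing the candidate maxima is expected to yield the three branches of the theorem, with the transitions at $x = 1/6$ and $x = 3/8$ arising when the identity of the optimal $a^*(x)$ changes; the final claim $\Lambda^*(x) < 1/2$ on $(7/24, 17/24)$ then follows by observing $\sqrt{2/9} < 1/2$ and solving $(-24x+25)/72 < 1/4$, together with the symmetry. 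The hard part is precisely this case analysis: one must identify the correct global optimizer $a^*(x)$ from several competing piecewise-quadratic candidates on each sub-interval of $x$, and verify that no alternative value of $a$ produces a larger $\sigma^2_{[0,a]}(x)$. By contrast, the Diophantine input is essentially automatic, because the super-Hadamard growth $3^{k^2}$ leaves no room for accidental near-collisions beyond the one built into the definition.
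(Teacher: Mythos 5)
Your overall strategy coincides with the paper's: both arguments pass through \eqref{discrv}, identify the Diophantine structure $3n_{2m-1}-n_{2m}=1$ leading to $\gamma_{3j,j,j}=\gamma_{j,3j,-j}=\tfrac12$ and all other $\gamma$'s zero (this is exactly Lemma~\ref{lemma1}), and then evaluate and maximize $\sigma^2_{[0,a]}(x)$. The one genuinely different ingredient is how you sum the series in \eqref{sigma}: the paper applies the convolution theorem to rewrite the correction term as $-\tfrac13\int_0^1\mathbf{I}_{[0,1-a]}(t)\,\mathbf{I}_{[0,\{3a\}]}(x-t)\,dt$, i.e.\ as a (centered) arc-overlap length, whereas you use $\sum_{j\ge1}j^{-2}\cos 2\pi jt=\pi^2B_2(\{t\})$ with $B_2(t)=t^2-t+\tfrac16$. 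I verified that your closed form $a(1-a)+\tfrac16\bigl(B_2(\{x\})+B_2(\{x-2a\})-B_2(\{x+a\})-B_2(\{x-3a\})\bigr)$ agrees with the paper's overlap formula (e.g.\ both give $\tfrac13$ at $a=\tfrac12$, $x=0$), so this step is sound and is an acceptable, arguably more algebraic, substitute for the convolution argument; both lead to the same piecewise quadratic in $a$.

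There are, however, two defects. First, the symmetry you claim is false as stated: $B_2(1-t)=B_2(t)$ sends $B_2(\{x-2a\})$ to $B_2(\{x+2a\})$, not back to itself, and indeed $\sigma^2_{[0,1/4]}(1/10)=\tfrac{5}{24}$ while $\sigma^2_{[0,1/4]}(9/10)=\tfrac{7}{40}$, so $\sigma^2_{[0,a]}(x)\neq\sigma^2_{[0,a]}(1-x)$ in general. The correct identity, which the paper uses, is $\sigma^2_{[0,a]}(x)=\sigma^2_{[0,1-a]}(1-x)$ (a consequence of $a_j(1-a)=-a_j(a)$, $b_j(1-a)=b_j(a)$); this still makes $\sup_a\sigma^2_{[0,a]}(x)$ invariant under $x\mapsto 1-x$, so the error is repairable, but as written your justification of the fourth branch of the formula is wrong. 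Second, the part of the proof that actually produces the three explicit branches and the breakpoints $x=\tfrac16$ and $x=\tfrac38$ --- the global maximization over $a$ for each fixed $x$ --- is only announced as ``expected to yield'' the result. The paper is also terse here, but it at least disposes of $a\in[0,\tfrac13]$ and $a\in[\tfrac23,1]$ with the uniform bound $\sigma^2_{[0,a]}(x)\le\tfrac29$ and tabulates the overlap integral for $a\in[\tfrac13,\tfrac23]$ before asserting the final optimization; your proposal stops one step earlier. Since the entire content of the displayed formula for $\Lambda^*$ lives in that case analysis, what you have is a correct and workable plan with one false intermediate claim, rather than a complete proof.
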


\vspace{.3cm}
\begin{figure}[H]
\begin{center}
\includegraphics[angle=0,width=80mm]{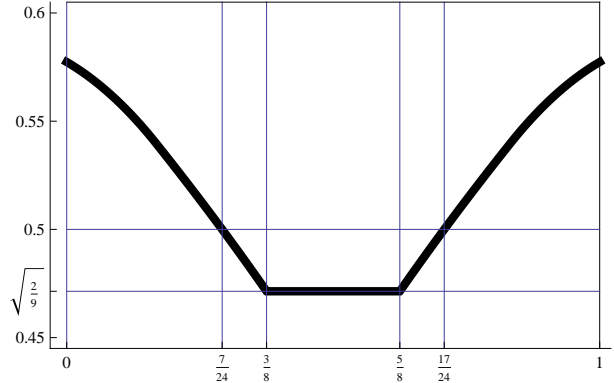}\\
\caption{The function $\Lambda^*(x)$ for the sequence $(n_k)_{k \geq 1}$ defined in Theorem~\ref{th1}.}
\end{center}
\end{figure}
The construction of the sequence in Theorem~\ref{th1} is very similar to the constructions in~\cite{aist2,aist3}, and makes use of~\eqref{generalLIL} and the simple structure of the sequence $(n_k)_{k \geq 1}$ with respect to linear Diophantine equations as those in~\eqref{dioph}. Sequences constructed in this way are (relatively simple) examples of sequences satisfying condition $\mathbf{D}$; and sequences satisfying condition $\mathbf{D}$ are essentially the only sequences for which the values of $\Lambda$ and $\Lambda^*$ can be calculated with currently available tools. The next theorem shows, roughly speaking, that the other two open problems on the previous list cannot be solved using such a standard construction. If a sequence satisfying one of the last two of the three points above exists, then it cannot satisfy condition $\mathbf{D}$ and accordingly must have a somewhat irregular Diophantine structure.
 
\begin{theorem} \label{th2}
If $(n_k)_{k \geq 1}$ satisfies the Diophantine condition $\mathbf{D}$, then
$$
\| \Lambda^* \| \geq \frac{1}{2}, \qquad \textrm{and} \qquad \Lambda \geq \frac{1}{2} \qquad \textup{a.e.}
$$
\end{theorem}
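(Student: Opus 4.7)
\emph{Proof of the $L^2$-bound $\|\Lambda^*\|\geq 1/2$.} By~\eqref{discrv} applied at the inner point $a=1/2$, $\Lambda^*(x)\geq\sigma_{[0,1/2]}(x)$ almost everywhere, so it suffices to prove $\|\sigma_{[0,1/2]}\|\geq 1/2$. The centered indicator $\mathbf{I}_{[0,1/2]}$ has Fourier coefficients $a_j=0$ for every $j\geq 1$ and $b_j=2/(\pi j)$ for $j$ odd (and $0$ otherwise), all non-negative. Integrating~\eqref{sigma} over $x\in[0,1]$ annihilates every summand with $\nu\neq 0$, leaving
\[
\int_0^1\sigma_{[0,1/2]}^2(x)\,dx=\|\mathbf{I}_{[0,1/2]}\|^2+\sum_{j_1,j_2\geq 1}\frac{\gamma_{j_1,j_2,0}}{2}\,b_{j_1}b_{j_2}\geq\frac14,
\]
since each $\gamma_{j_1,j_2,0}$ is a non-negative limit of counts and the $b_j$'s are non-negative. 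Hence $\|\Lambda^*\|\geq 1/2$.

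\emph{Plan for the pointwise bound $\Lambda\geq 1/2$ a.e.} This is the harder half because one cannot rely on $L^2$ averaging in $x$; one must exploit that $\Lambda(x)=\sup_{0\leq a\leq b\leq 1}\sigma_{[a,b]}(x)$ a.e.\ lets the interval depend on $x$. I would test against the one-parameter family $\mathbf{I}_{[a,a+1/2]}$, $a\in[0,1/2]$. Rotating the Fourier coefficients of $\mathbf{I}_{[0,1/2]}$ by the phase $e^{2\pi ija}$ and substituting in~\eqref{sigma} gives
\[
\sigma_{[a,a+1/2]}^2(x)=\tfrac14+R(x,a),\qquad R(x,a):=\!\!\sum_{\substack{\nu\in\Z\\ j_1,j_2\text{ odd}}}\!\frac{\gamma_{j_1,j_2,\nu}}{2}\,b_{j_1}b_{j_2}\cos 2\pi\bigl(\nu x-(j_1-j_2)a\bigr),
\]
reducing the problem to showing $\sup_{a\in[0,1/2]}R(x,a)\geq 0$ for a.e.\ $x$. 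Since $j_1-j_2$ is even, $R(x,\cdot)$ has period $1/2$ in $a$, so the sup over admissible $a$ coincides with the sup over the full unit circle.

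\emph{The main obstacle.} Averaging $R(x,\cdot)$ in $a$ extracts only the diagonal term $\tilde C(x):=\sum_\nu c_\nu\cos 2\pi\nu x$ with $c_\nu=\tfrac12\sum_{j\text{ odd}}\gamma_{j,j,\nu}b_j^2\geq 0$ and $c_0=0$; this is a cosine series whose coefficients are non-negative but whose mean in $x$ vanishes, so it takes negative values on a set of positive $x$-measure, and on that set the trivial bound ``$\sup R\geq$ average'' does not give $\sup R\geq 0$. The resolution must use the oscillation of $R$ in $a$ at off-diagonal frequencies $j_1\neq j_2$. Writing $R(x,a)=\tilde C(x)+2\sum_{k\geq 2,\,k\text{ even}}\mathrm{Re}\bigl[\phi_k(x)\,e^{2\pi ika}\bigr]$, my approach would combine the pointwise constraint $R(x,a)\geq -1/4$ (equivalent to $\sigma^2\geq 0$) with the known value of $\tilde C(x)$, via a Fej\'er-kernel-type identity, to force an excursion of $R$ into the non-negative range; on the bad set $\{\tilde C<0\}$ one may alternatively have to bring in an auxiliary test interval of length $\neq 1/2$ whose $\sigma$ is directly large enough. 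Closing the gap between the mean inequality and a pointwise lower bound is the crux of the proof.
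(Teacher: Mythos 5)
Your first half is exactly the paper's argument: test against $\mathbf{I}_{[0,1/2]}$, note $a_j=0$, $b_j\geq 0$, $\gamma_{j_1,j_2,\nu}\geq 0$, integrate \eqref{sigma} in $x$ to kill all $\nu\neq 0$ terms, and conclude $\int_0^1\sigma_{[0,1/2]}^2\,dx\geq\|\mathbf{I}_{[0,1/2]}\|^2=1/4$. That part is correct and complete.

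For the second half you choose the same test family $\mathbf{I}_{[a,a+1/2]}$ and the same averaging-in-$a$ device as the paper, but you stop short of a proof because of an obstacle that is not actually there. You claim that averaging $R(x,\cdot)$ over $a$ leaves a diagonal series $\tilde C(x)=\sum_\nu c_\nu\cos 2\pi\nu x$ with $c_\nu=\tfrac12\sum_{j\text{ odd}}\gamma_{j,j,\nu}b_j^2$, which could be negative on a set of positive measure. But under the Hadamard gap condition \eqref{had} one has $\gamma_{j,j,\nu}=0$ for \emph{all} $j$ and $\nu$: the equation $jn_k-jn_l=\nu$ with $k\neq l$ has no solutions for $\nu=0$ (the sequence is strictly increasing), and for $\nu\neq 0$ it forces $|n_k-n_l|\geq(1-1/q)\max(n_k,n_l)$, so $\max(n_k,n_l)\leq |\nu|/(j(1-1/q))$ and $S(j,j,\nu,N)=\mathcal{O}(1)$ uniformly, whence $\gamma_{j,j,\nu}=0$. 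Thus $\tilde C\equiv 0$, and since for $j_1\neq j_2$ both odd the frequencies $j_1\pm j_2$ are even and nonzero, every off-diagonal term integrates to zero over $a\in[0,1/2]$ (the interchange of summation and integration being justified by the absolute bound of Lemma~\ref{lemmagamma} and dominated convergence). This gives, for \emph{every} fixed $x$,
\begin{equation*}
\int_0^{1/2}\sigma_{[a,a+1/2]}^2(x)\,da=\frac18,
\end{equation*}
so some $a=a(x)$ satisfies $\sigma_{[a,a+1/2]}^2(x)\geq\tfrac14$, and \eqref{discrv} yields $\Lambda(x)\geq\tfrac12$ a.e. In short: your setup is the paper's, but the Fej\'er-kernel/auxiliary-interval machinery you sketch to get past the ``bad set'' $\{\tilde C<0\}$ is unnecessary, and since you explicitly leave that step open, the second assertion of the theorem is not proved in your proposal; the missing ingredient is precisely the observation that the gap condition annihilates the diagonal coefficients $\gamma_{j,j,\nu}$.
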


The next theorem is an improved version of the general lower bound~\eqref{lowerb}, which follows from a version of Koksma's inequality for symmetric functions (see Lemma~\ref{modkoksma} below).

\begin{theorem} \label{th3}
For any sequence of positive integers $(n_k)_{k \geq 1}$ satisfying~\eqref{had} we have
$$
\Lambda \geq \frac{1}{2 \sqrt{2}} \qquad \textup{a.e.}
$$
\end{theorem}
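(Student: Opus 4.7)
The plan is to apply Lemma~\ref{modkoksma} -- the version of Koksma's inequality for symmetric functions -- to the single test function $f(x) = \cos 2\pi x$, and then to combine the resulting deterministic inequality with the Erd\H os--G\'al LIL~\eqref{coslil}. Because the lemma is only mentioned and not yet stated in the excerpt, I will also indicate how I would prove it.

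For the lemma I would start from the Stieltjes integration-by-parts formula
$$
\frac{1}{N}\sum_{k=1}^N f(x_k) - \int_0^1 f(x)\,dx \;=\; -\int_0^1 \bigl(F_N(x)-x\bigr)\,df(x),
$$
where $F_N$ is the empirical distribution function of $x_1,\dots,x_N$. When $f(0)=f(1)$, the integral $\int_0^1 c\,df(x)$ vanishes for every constant $c$, so the kernel $F_N-\mathrm{id}$ may be replaced by $F_N-\mathrm{id}-c$. Choosing $c = \tfrac12\bigl(\sup(F_N-\mathrm{id}) + \inf(F_N-\mathrm{id})\bigr)$ reduces the sup-norm of the kernel from $D_N^*$ to $\tfrac12\bigl(\sup(F_N-\mathrm{id}) - \inf(F_N-\mathrm{id})\bigr) = \tfrac12 D_N$, the last equality being the standard identification of the extremal discrepancy with the oscillation of $F_N-\mathrm{id}$. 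This yields the lemma in the form
$$
\left|\frac{1}{N}\sum_{k=1}^N f(x_k) - \int_0^1 f(x)\,dx \right| \;\leq\; \frac{\var f}{2}\,D_N(x_1,\dots,x_N),
$$
a factor of two sharper than~\eqref{koks} after using $D_N \leq 2D_N^*$.

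To deduce Theorem~\ref{th3} I would apply this inequality to $f(x)=\cos 2\pi x$, which is $1$-periodic, of mean zero, of total variation $\var f = 4$ on $[0,1]$, and satisfies $f(0)=f(1)=1$. This gives the pointwise bound
$$
\left|\sum_{k=1}^N \cos 2\pi n_k x\right| \;\leq\; 2\,N\,D_N\bigl(\{n_1 x\},\dots,\{n_N x\}\bigr).
$$
Dividing by $\sqrt{2N\log\log N}$, taking the limsup as $N\to\infty$, and invoking~\eqref{coslil} then yields $\Lambda(x) \geq \tfrac12\cdot\tfrac{1}{\sqrt 2} = \tfrac{1}{2\sqrt 2}$ for almost every $x$.

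The entire strength of the theorem over the bound $\tfrac{1}{4\sqrt 2}$ for $\Lambda^*$ in~\eqref{lowerb} comes from the factor-of-two gain in the lemma, which in turn rests on the identification $D_N = \sup(F_N-\mathrm{id})-\inf(F_N-\mathrm{id})$ together with the freedom to recentre the kernel granted by the condition $f(0)=f(1)$. I expect the small bookkeeping around this recentring -- and in particular the sharp identification of $D_N$ with the oscillation of $F_N-\mathrm{id}$ -- to be the only step of the argument where any care is required; everything else is routine.
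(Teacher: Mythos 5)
Your deduction of the theorem is exactly the paper's: the paper proves Theorem~\ref{th3} in one line by applying Lemma~\ref{modkoksma} to $f(x)=\cos 2\pi x$ (with $\var f=4$, mean zero) and invoking the Erd\H os--G\'al LIL~\eqref{coslil}, which is precisely your final step, and your constants check out. Where you genuinely diverge is in the proof of the lemma itself. The paper proves Lemma~\ref{modkoksma} under the hypothesis that $f$ is symmetric about $1/2$, by a folding argument: replace each $x_k>1/2$ by $1-x_k$, use $f(x_k)=f(\bar x_k)$, apply the classical Koksma inequality~\eqref{koks} to the function $f(x/2)$ at the doubled points $2\bar x_k$, and observe that $\var f(x/2)=(\var f)/2$ while $D_N^*(2\bar x_1,\dots,2\bar x_N)\le D_N(x_1,\dots,x_N)$ because intervals $[1-a,1]$ for the folded-and-doubled points correspond to intervals $[1/2-a/2,1/2+a/2]$ for the original ones. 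Your route---Stieltjes integration by parts, then recentring the kernel $F_N-\mathrm{id}$ by a constant (legitimate since $f(0)=f(1)$ makes $\int_0^1 df=0$), and identifying the oscillation of $F_N-\mathrm{id}$ (with the appropriate one-sided limits) with $D_N$---is valid and in fact proves a more general statement: it needs only $f(0)=f(1)$, not symmetry about $1/2$. The two technical points you flag (the oscillation identity for $D_N$, and existence of the Riemann--Stieltjes integral when $f$ and the empirical distribution share jump points) are real but standard, and they are vacuous in the application since $f=\cos 2\pi x$ is smooth, so $df=-2\pi\sin(2\pi x)\,dx$ and everything is an ordinary integral. In short: same proof of the theorem, a different and slightly more general proof of the key lemma; the paper's folding argument buys a very short elementary proof tailored to symmetric $f$, yours buys applicability to all $1$-periodic BV functions.
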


Some important problems concerning lower bounds for $\Lambda$ and $\Lambda^*$ remain open. In all three questions below it is assumed that $(n_k)_{k \geq 1}$ satisfies the Hadamard gap condition~\eqref{had}.

\begin{itemize}
\item \emph{Open problem 1: } Is it possible that $\| \Lambda^* \| < \frac{1}{2}$?
\item \emph{Open problem 2: } Is it possible that $\Lambda^* < 1/2$ almost everywhere?
\item \emph{Open problem 3: } Is it possible that $\Lambda < \frac{1}{2}$ on a set of positive measure?
\end{itemize}

Theorem~\ref{th2} suggests that the answer of all three problems could be negative, but this is by no means certain. Problem 1 and Problem 2 are related: if the answer of Problem 1 is ``no'', then the answer of Problem 2 must also be ``no''.\\

The second part of Theorem~\ref{th2} is proved with an argument which involves taking the average $L^2$ norm over all intervals of fixed length. This argument is related to an interesting phenomenon, which apparently has not been observed before. Since this observation might be fruitful for further investigations, we state it as a theorem below. Note that for this theorem no growth conditions on the numbers $n_k$ are necessary.

\begin{theorem} \label{th4}
For any $N \geq 1$ let $n_1, \dots, n_N$ be distinct positive integers. Let $z \in (0,1)$ be fixed. Then for any $x \in [0,1]$ we have
$$
\int_0^1 \int_0^1 \left( \sum_{k=1}^N \mathbf{I}_{[a,a+z]} (n_k x) \right)^2 ~dx ~da= z (1-z) N.
$$
\end{theorem}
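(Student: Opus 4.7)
The plan is to expand the square into a double sum
$$
\sum_{k,l=1}^{N} \int_0^1\!\!\int_0^1 \mathbf{I}_{[a,a+z]}(n_k x)\, \mathbf{I}_{[a,a+z]}(n_l x)\, dx\, da
$$
and treat diagonal and off-diagonal terms separately. The key reformulation is the identity $\mathbf{I}_{[a,a+z]}(y) = h(y-a)$, where
$$
h(u) := \sum_{m \in \Z} \mathds{1}_{[0,z]}(u+m) - z
$$
is the centered $1$-periodic indicator of $[0,z]$; this is immediate from the definition of $\mathbf{I}_{[\cdot,\cdot]}$. A one-line computation gives the constant value $\int_0^1 h(u)^2\, du = (1-z)^2 z + z^2(1-z) = z(1-z)$.

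For a diagonal term $k=l$, the substitution $u = n_k x - a$ in the inner integral, together with the $1$-periodicity of $h$ and the fact that $n_k$ is an integer, yields
$$
\int_0^1 h(n_k x - a)^2\, da = \int_0^1 h(u)^2\, du = z(1-z),
$$
independent of $x$. Summation over $k$ contributes exactly $Nz(1-z)$, matching the claimed right-hand side.

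For an off-diagonal term $k\neq l$, the same substitution $u = n_l x - a$ rewrites the inner $a$-integral as $\int_0^1 h\bigl((n_k - n_l) x + u\bigr)\, h(u)\, du$. Applying Fubini (the integrand is uniformly bounded) and evaluating the $x$-integral first, one uses that $m := n_k - n_l$ is a \emph{nonzero} integer and that $h$ is $1$-periodic with mean zero:
$$
\int_0^1 h(mx + u)\, dx = \int_0^1 h(u')\, du' = 0.
$$
Hence every off-diagonal contribution vanishes, and the theorem follows by combining the two cases.

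The only place where distinctness of the $n_k$ enters is in ensuring $m\neq 0$ in the off-diagonal step; no growth, lacunarity, or number-theoretic hypothesis is invoked, which explains why the identity holds for \emph{arbitrary} distinct positive integers. There is no real obstacle — the argument reduces to a clean orthogonality computation once the translation identity $\mathbf{I}_{[a,a+z]}(y) = h(y-a)$ is in place.
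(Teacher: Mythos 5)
Your proof is correct and follows essentially the same route as the paper: expand the square, note that each diagonal term contributes $z(1-z)$, and kill each cross term by a change of variables exploiting the translation identity $\mathbf{I}_{[a,a+z]}(y)=\mathbf{I}_{[0,z]}(y-a)$ together with the $1$-periodicity and zero mean of the centered indicator, with distinctness of the $n_k$ entering only to make $n_k-n_l$ a nonzero integer. The only cosmetic difference is the order of integration: you integrate out $a$ first (shifting by $n_l x$) and then let the $x$-integral vanish, whereas the paper fixes $g(a)=\int_0^1 \mathbf{I}_{[a,a+z]}(mx)\mathbf{I}_{[a,a+z]}(nx)\,dx$, substitutes $x \mapsto x+a/m$, and averages $a$ over $[0,m]$ using the periodicity of $g$.
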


Theorem~\ref{th4} should be compared with the fact that for a sequence $X_1, \dots, X_N$ of i.i.d. $[0,1]$-uniformly distributed random variables for any $a,z$ we have
$$
\int_\Omega \left( \sum_{k=1}^N \mathbf{I}_{[a,a+z]} (X_k(\omega)) \right)^2 ~d\omega = z (1-z) N.
$$
Such a result does \emph{not} hold for the (dependent) random variables $\{n_1 x\}, \dots, \{n_N x\}$ for all individual values of $a$; however, as Theorem~\ref{th4} shows, it holds ``on average'' by integrating over all indicator functions of equal length.

\section{Auxiliary results}

In this section we first state several auxiliary results, and give the proofs for them afterward.

\begin{lemma} \label{lemma1}
For the sequence defined in Theorem~\ref{th1} we have for any fixed $1 \leq j_2 \leq j_1$ that 
$$
S(j_1,j_2,\nu,N) = \left\{ \begin{array}{ll} \frac{N}{2} + \mathcal{O} (1) & \textrm{if $j_1 = 3 j_2$ and $j_2 = \nu$},\\
\mathcal{O} (1) & \textrm{otherwise}, \end{array} \right.
$$
uniformly in $\nu \in \Z$.
\end{lemma}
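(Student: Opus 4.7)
The decisive arithmetic fact is the identity
$$ 3\,n_{2m-1} - n_{2m} \;=\; 1 \qquad (m \ge 1), $$
immediate from $n_{2m-1} = 3^{(2m-1)^2}$ and $n_{2m} = 3^{(2m-1)^2+1}-1$. Multiplying by $j_2$ rewrites $j_1 n_{2m-1} - j_2 n_{2m} = \nu$ as $(j_1 - 3j_2)\,3^{(2m-1)^2} = \nu - j_2$. When $j_1 = 3 j_2$ this forces $\nu = j_2$, and is then satisfied by every index pair $(k,l) = (2m-1, 2m)$ with $2m \le N$, producing $\lfloor N/2 \rfloor = N/2 + \mathcal{O}(1)$ solutions -- the main term. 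When $j_1 \ne 3j_2$, the exponent $(2m-1)^2$ is pinned to a specific value by $\nu$, so at most one such $m$ contributes.

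To handle all remaining pairs $(k,l)$, I would split by the parities of $k$ and $l$. Every such case reduces, after absorbing into $\nu$ a bounded constant $c \in \{0, \pm j_1, \pm j_2, \pm(j_1-j_2)\}$, to an equation
$$ j_1\,3^a - j_2\,3^b \;=\; \nu - c, $$
where $a$ and $b$ belong to $\{s^2,\, s^2+1 : s \text{ odd}\}$. Two such exponents coming from \emph{distinct} blocks differ by at least $7$, and within a single block the only adjacent pair $(s^2, s^2+1)$ corresponds either to $(k,l)=(2m-1,2m)$ (the main case, handled above) or to the reversed $(2m,2m-1)$, whose analog would require $3j_1 = j_2$, impossible under $1 \le j_2 \le j_1$. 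Hence in every remaining subcase $a \neq b$.

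For $a \neq b$, a $3$-adic argument finishes: writing $j_i = 3^{\alpha_i} j_i'$ with $\gcd(j_i', 3) = 1$, the two terms on the left have distinct $3$-adic valuations $\alpha_1 + a$ and $\alpha_2 + b$, so the $3$-adic valuation of $\nu - c$ equals the smaller of these, pinning down one exponent; the equation then pins down the other. Thus at most one pair $(a, b)$, and so $\mathcal{O}(1)$ pairs $(k, l)$, arise per $\nu$ per subcase, giving the uniform $\mathcal{O}(1)$ bound after summing the finitely many parity combinations. The main obstacle -- really the only real work -- is the combinatorial case-check: ensuring no parity combination is overlooked, and in particular noting that the degenerate case $k = l$ with $j_1 \neq j_2$ contributes $\mathcal{O}(1)$ because $(j_1 - j_2) n_k = \nu$ has at most one solution since the $n_k$ are distinct.
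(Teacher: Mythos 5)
The paper does not actually prove this lemma in situ: it observes that the sequence is the base-$3$ analogue of the construction in \cite{aist2} and simply refers to \cite[Lemma 2]{aist2}. Your self-contained argument is therefore a genuinely different (and more informative) route, and its skeleton is the right one: the identity $3n_{2m-1}-n_{2m}=1$ produces the main term $\lfloor N/2\rfloor$ exactly when $j_1=3j_2$ and $\nu=j_2$; the reversed pairs $(2m,2m-1)$ die because $3j_1-j_2>0$; the diagonal $k=l$ contributes $\mathcal{O}(1)$; and every remaining pair reduces to $j_1 3^a-j_2 3^b=\nu-c$ with $a\neq b$ and $c$ ranging over a bounded set.

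There is, however, one step that is false as written: it is not true that for $a\neq b$ the terms $j_1 3^a$ and $j_2 3^b$ automatically have distinct $3$-adic valuations. They coincide precisely when $a-b=\alpha_2-\alpha_1$, and since condition $\mathbf{D}$ requires the lemma for arbitrarily large fixed $j_1,j_2$, this can happen for cross-block exponent pairs: for instance $j_1=3^8$, $j_2=1$ and $(a,b)=(1,9)$ give $v_3(j_1 3^a)=v_3(j_2 3^b)=9$. The conclusion survives, but you need one more observation: for fixed $j_1,j_2$, the cross-block pairs $(a,b)$ with $a-b=\alpha_2-\alpha_1$ force $s^2-t^2\in\{\alpha_2-\alpha_1-1,\,\alpha_2-\alpha_1,\,\alpha_2-\alpha_1+1\}$ with $s\neq t$ odd, which has only boundedly many solutions, so these exceptional pairs contribute $\mathcal{O}(1)$ uniformly in $\nu$ regardless of whether they satisfy the equation. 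For all other pairs your valuation argument works, provided you also split into the two subcases $\alpha_1+a<\alpha_2+b$ and $\alpha_1+a>\alpha_2+b$ (each yielding at most one solution) and note that $\nu-c=0$ admits no solution with distinct valuations. With that patch the proof is complete and correct.
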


\begin{lemma} \label{lemmagamma}
Let $f$ be a function satisfying~\eqref{f}, and write
$$
f(x) = \sum_{j=1}^\infty \left(a_j \cos 2 \pi j x + b_j \sin 2 \pi j x \right).
$$
Let $(n_k)_{k \geq 1}$ be a sequence satisfying~\eqref{had} and the Diophantine condition $\mathbf{D}$. Then we have 
\begin{equation} \label{lemmalhs}
\sum_{\nu = -\infty}^\infty \sum_{j_1,j_2=1}^\infty \frac{\gamma_{j_1,j_2,\nu}}{2} \left( |a_{j_1} a_{j_2}| + |b_{j_1} b_{j_2}| + |b_{j_1}a_{j_2}| + |a_{j_1} b_{j_2}| \right) \leq \frac{2 (\var f)^2 q}{3 (q-1)},
\end{equation}
where $q$ is the growth factor in~\eqref{had}.
\end{lemma}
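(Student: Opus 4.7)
The plan is to begin with the classical Fourier coefficient estimate $|a_j|, |b_j| \leq (\var f)/(\pi j)$ for functions of bounded variation, obtained by integration by parts. Setting $d_j := |a_j| + |b_j| \leq 2(\var f)/(\pi j)$, one uses the factoring identity
$$
|a_{j_1} a_{j_2}| + |b_{j_1} b_{j_2}| + |a_{j_1} b_{j_2}| + |b_{j_1} a_{j_2}| = d_{j_1} d_{j_2} \leq \frac{4 (\var f)^2}{\pi^2 j_1 j_2}
$$
to rewrite the left-hand side of~\eqref{lemmalhs} as $\tfrac{1}{2}\sum_{\nu, j_1, j_2} \gamma_{j_1, j_2, \nu} d_{j_1} d_{j_2}$, reducing the problem to the estimate
$$
\sum_{\nu \in \Z} \sum_{j_1, j_2 \geq 1} \frac{\gamma_{j_1, j_2, \nu}}{j_1 j_2} \leq \frac{\pi^2 q}{3(q-1)},
$$
after which the stated constant follows by combining $\zeta(2) = \pi^2/6$ with the factor $2/\pi^2$ coming from the squared Fourier bound.

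To control the inner sum, I would decompose $\gamma_{j_1, j_2, \nu}$ according to the index shift $m := k - l$ in the Diophantine equation $j_1 n_k - j_2 n_l = \nu$, writing
$$
\gamma^{(m)}_{j_1, j_2, \nu} := \lim_{N \to \infty} \frac{1}{N}\, \# \{ l :\ 1 \leq l,\, l+m \leq N,\ j_1 n_{l+m} - j_2 n_l = \nu \},
$$
so that $\gamma_{j_1, j_2, \nu} = \sum_{m \neq 0} \gamma^{(m)}_{j_1, j_2, \nu}$. Since each $l$ corresponds to a single value of $j_1 n_{l+m} - j_2 n_l$, one has the elementary bound $\sum_\nu \gamma^{(m)}_{j_1, j_2, \nu} \leq 1$. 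More importantly, the Hadamard gap condition $n_{l+m}/n_l \geq q^{|m|}$ forces $j_1 n_{l+m} - j_2 n_l$ to grow with $l$ unless $j_1 q^{|m|} \leq j_2$ (for $m > 0$) or $j_2 q^{|m|} \leq j_1$ (for $m < 0$); otherwise no fixed $\nu$ can be attained for infinitely many $l$, so that $\gamma^{(m)}_{j_1, j_2, \nu} = 0$. In particular the $m = 0$ contribution vanishes.

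For each admissible shift $m > 0$, the contribution to the double sum is at most $\sum_{j_2 \geq j_1 q^m} 1/(j_1 j_2)$, weighted by the density bound $\sum_\nu \gamma^{(m)} \leq 1$. Combining Fourier decay with the Hadamard constraint, the $(j_1, j_2)$-sum collapses into a factor of order $q^{-m}$; summing over $m \geq 1$ (and symmetrically over $m \leq -1$) produces the geometric series $\sum_{m \geq 1} q^{-m} = 1/(q-1)$. Together with the zeta factor $\pi^2/6$ and the constant $q$ absorbing boundary cases, this yields the required bound.

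The main technical obstacle is the bookkeeping in the shift-decomposition: the naive pointwise bound $\sum_\nu \gamma^{(m)} \leq 1$ applied together with the region $\{j_2 \geq j_1 q^m\}$ produces a logarithmically divergent $(j_1, j_2)$-sum. The key observation that saves convergence is that $\gamma^{(m)}_{j_1, j_2, \nu}$ is positive only when the ratio $j_2/j_1$ is (asymptotically) attained by $n_{l+m}/n_l$ for a positive-density subset of $l$; the Diophantine condition $\mathbf{D}$ guarantees that for each $m$ the effective support in $(j_1, j_2)$ is sparse enough for the geometric $q^{-m}$ decay to dominate, so that the series summed against the $1/(j_1 j_2)$ weight converges to the stated bound.
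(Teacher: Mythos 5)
Your reduction is sound up to the point you yourself flag: the Fourier bound $|a_j|,|b_j|\le (\var f)/(\pi j)$, the factoring into $d_{j_1}d_{j_2}$, and the reduction to $\sum_{\nu}\sum_{j_1,j_2}\gamma_{j_1,j_2,\nu}/(j_1j_2)$ all match the paper. But the step that actually carries the proof is missing. You correctly observe that the bound $\sum_\nu \gamma^{(m)}_{j_1,j_2,\nu}\le 1$ for each \emph{fixed} pair $(j_1,j_2)$, combined with the region $\{j_2\ge j_1q^m\}$, gives a divergent harmonic sum in $j_2$; your proposed rescue --- that condition $\mathbf{D}$ makes ``the effective support in $(j_1,j_2)$ sparse enough'' --- is an assertion, not an argument, and condition $\mathbf{D}$ by itself says nothing of the sort (it only controls each $S(j_1,j_2,\nu,N)$ individually). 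What is actually needed, and what the paper proves, is a bound on the \emph{aggregate} mass: for fixed $j_1$ and a fixed multiplicative block $j_1q^r<j_2\le j_1q^{r+1}$ one has $\sum_{j_2 \,\mathrm{in\ the\ block}}\sum_{\nu\in\Z}\gamma_{j_1,j_2,\nu}\le 1$. The reason is a uniqueness/disjointness argument: for a large index $k_1$, two solutions $j_1n_{k_1}-j_2n_{k_2}=\nu_1$ and $j_1n_{k_1}-j_3n_{k_3}=\nu_2$ with $j_2,j_3$ in the same block and $\nu_1,\nu_2$ from a fixed finite set would force $n_{k_2}/n_{k_3}\in(q^{-1},q)$, hence $k_2=k_3$ and then $j_2=j_3$, $\nu_1=\nu_2$; so each $k_1$ contributes to at most one triple $(j_2,k_2,\nu)$ per block, and the densities of the corresponding solution sets add up to at most $1$. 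With this block bound, $1/j_2\le 1/(j_1q^r)$ turns each block into a contribution $\le K^2/(j_1^2q^r)$, and summing the geometric series in $r$ and $\zeta(2)$ in $j_1$ gives exactly $2K^2q/(3(q-1))$.

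Your shift decomposition $m=k-l$ could be repaired along the same lines --- one can show that for fixed $j_1$ and $m$ the sets $\{l: j_1n_{l+m}-j_2n_l=\nu\}$ for distinct pairs $(j_2,\nu)$ are pairwise disjoint up to finitely many $l$ (two such relations at the same $l$ force $(j_2-j_2')n_l=\nu'-\nu$, pinning down $n_l$), so that $\sum_{j_2,\nu}\gamma^{(m)}_{j_1,j_2,\nu}\le 1$ for each $(j_1,m)$ --- but as written this key disjointness step is absent, and it is precisely the content of the lemma. Two further loose ends: the limits defining $\gamma^{(m)}_{j_1,j_2,\nu}$ need not exist under condition $\mathbf{D}$ (you should work with upper densities), and the bookkeeping sketched (``the constant $q$ absorbing boundary cases'') does not visibly produce the stated constant $2(\var f)^2q/(3(q-1))$; the shift decomposition naturally yields $\sum_{m\ge1}q^{-m}=1/(q-1)$ rather than the paper's $\sum_{r\ge0}q^{-r}=q/(q-1)$, so the final constant must be tracked explicitly to confirm the inequality as stated.
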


Lemma~\ref{lemmagamma} provides a uniform upper bound for the function $\sigma_f(x)$ defined in~\eqref{sigma}, and will be needed for the proof of the second part of Theorem~\ref{th2}.

\begin{lemma} \label{modkoksma}
Let $f(x)$ be a function having bounded variation $\var f$ on $[0,1]$, which is symmetric around 1/2; that is, it satisfies $f(1/2-y)=f(1/2+y),~y \in [0,1/2]$. Then for any points $x_1, \dots, x_N \in [0,1]$ we have 
$$
\left| \int_0^1 f(x) ~dx - \frac{1}{N} \sum_{k=1}^N f(x_k) \right| \leq \frac{\var f}{2} D_N(x_1, \dots, x_N).
$$
\end{lemma}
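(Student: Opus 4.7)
The plan is to mimic the classical proof of Koksma's inequality and exploit the symmetry $f(1-x)=f(x)$ to gain a factor of $1/2$. Let $F_N(x)=\frac{1}{N}\#\{k\leq N:\ x_k\leq x\}$ denote the empirical distribution of $x_1,\dots,x_N$. A standard Stieltjes integration-by-parts argument (as in Koksma's inequality) gives
$$
E:=\int_0^1 f(x)\,dx-\frac{1}{N}\sum_{k=1}^N f(x_k)=\int_0^1 \bigl(F_N(x)-x\bigr)\,df(x).
$$
In the usual proof one bounds $|F_N(x)-x|\leq D_N^*$ pointwise and picks up $\var f\cdot D_N^*$; here we instead want a bound in terms of $D_N$ with the improved constant $1/2$.

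First I would symmetrize. Split the integral into two equal halves and make the substitution $x\mapsto 1-x$ in one of them. Since $f$ is symmetric around $1/2$, the measure $df$ transforms as $df(1-x)=-df(x)$, so the substitution yields
$$
E=\frac{1}{2}\int_0^1 \bigl(F_N(x)-x\bigr)\,df(x)-\frac{1}{2}\int_0^1 \bigl(F_N(1-x)-(1-x)\bigr)\,df(x).
$$
Combining the two integrands gives
$$
E=\frac{1}{2}\int_0^1\Bigl[\bigl(F_N(x)-F_N(1-x)\bigr)-(2x-1)\Bigr]\,df(x).
$$

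Next I would bound the bracketed quantity uniformly by $D_N$. For $x\geq 1/2$, the interval $[1-x,x]$ has length $2x-1$, so the expression in brackets is exactly
$$
\bigl(F_N(x)-F_N(1-x)\bigr)-\bigl|[1-x,x]\bigr|,
$$
which is bounded in absolute value by $D_N$ by the definition of the extremal discrepancy; for $x\leq 1/2$ the bracket is the negative of the analogous expression on $[x,1-x]$, so the same bound $D_N$ applies. Taking absolute values inside the integral then gives
$$
|E|\leq \frac{D_N}{2}\int_0^1 |df(x)|=\frac{\var f}{2}\,D_N,
$$
which is the claimed inequality.

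The only subtle point is justifying the integration by parts and the change of variables for a general BV function rather than a smooth one; this is handled by the standard Riemann-Stieltjes theory (treating $F_N$ as a right-continuous step function and $f$ as BV) together with the fact that the symmetry of $f$ is reflected in the signed measure $df$ via $df\bigl(A\bigr)=-df\bigl(1-A\bigr)$ on Borel sets $A\subseteq[0,1]$. Beyond that bookkeeping, the argument is essentially the three-line computation above.
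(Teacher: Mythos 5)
Your argument is correct, but it takes a genuinely different route from the paper's. The paper never writes the error as a Stieltjes integral: it folds the sample points about $1/2$ (setting $\bar{x}_k=x_k$ or $1-x_k$, so that $f(x_k)=f(\bar{x}_k)$), passes to the rescaled function $f(x/2)$ and the points $2\bar{x}_k$, and then invokes the classical Koksma inequality~\eqref{koks} as a black box; the factor $1/2$ comes from $\var f(x/2)=(\var f)/2$, and the replacement of $D_N^*$ by $D_N$ comes from the observation that anchored intervals for the folded points $2\bar{x}_k$ correspond to intervals $[1/2-a/2,\,1/2+a/2]$ for the original points. You instead reprove the Koksma-type bound from scratch: write the error as $\int_0^1\bigl(F_N(x)-x\bigr)\,df(x)$, average it with its reflection under $x\mapsto 1-x$ using $f(1-x)=f(x)$, and bound the resulting deviation over the symmetric intervals $[1-x,x]$ by $D_N$, the factor $1/2$ now coming from the averaging rather than from halving the variation. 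The two mechanisms are equivalent in spirit --- both exploit that for symmetric $f$ only intervals symmetric about $1/2$ matter --- but the paper's version inherits all delicate points from the already-established Koksma inequality, whereas yours must handle them directly: to be complete you should check that the integration by parts produces no boundary term (e.g.\ a point $x_k$ at an endpoint, or a common discontinuity of $f$ and $F_N$, handled by the usual Lebesgue--Stieltjes conventions), and that $F_N(x)-F_N(1-x)$ counts points in a half-open interval, whose deviation from $2x-1$ is still at most $D_N$ by a trivial limiting argument, since the paper defines $D_N$ via closed intervals. These are exactly the bookkeeping issues you flag, and they are routine; with them spelled out your proof is a valid, self-contained alternative to the paper's reduction to~\eqref{koks}.
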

This lemma is a version of Koksma's inequality~\eqref{koks}, adapted to the case of symmetric functions. Note the two differences between this Lemma and the original version of Koksma's inequality: the error estimate $\var f$ is replaced by $\left(\var f \right)/2$, and the star-discrepancy $D_N^*$ is replaced by $D_N$.

\begin{proof}[Proof of Lemma~\ref{lemma1}]
The construction of the sequence $(n_k)_{k \geq 1}$ in Theorem~\ref{th1} is exactly the same as the construction of the sequence in~\cite{aist2}, except that the base $2$ has been replaced by $3$. Thus Lemma~\ref{lemma1} is a variant of~\cite[Lemma 2]{aist2}, and can be shown in the same way.
\end{proof}

\begin{proof}[Proof of Lemma~\ref{lemmagamma}]
Let $K = \var f$. Then for the Fourier coefficients of $f$ we have (see for example~\cite[Theorem 1]{izu} or \cite[p. 48]{zyg}) that
$$
|a_j| \leq \frac{K}{\pi j}, \qquad |b_j| \leq \frac{K}{\pi j}, \qquad j \geq 1. 
$$
Thus the left-hand side of~\eqref{lemmalhs} is bounded by
\begin{equation}\label{rhsrew}
\sum_{\nu = -\infty}^\infty \sum_{j_1,j_2=1}^\infty \frac{\gamma_{j_1,j_2,\nu}}{2} \frac{4 K^2}{\pi^2 j_1 j_2} = \frac{4}{\pi^2} \sum_{\nu = -\infty}^\infty \sum_{1 \leq j_1 < j_2 < \infty} \frac{\gamma_{j_1,j_2,\nu} K^2}{j_1 j_2},
\end{equation}
where the equality follows from the fact that for all $j_1,j_2,\nu$ we have $\gamma_{j_1,j_2,\nu}=\gamma_{j_2,j_1,-\nu}$, and that by~\eqref{had} we necessarily have $\gamma_{j_1,j_2,\nu}=0$ whenever $j_1=j_2$. We can rewrite the right-hand side of~\eqref{rhsrew} in the form
\begin{equation} \label{rhsest2}
\frac{4}{\pi^2} \sum_{j_1=1}^\infty \sum_{r=0}^\infty ~\sum_{j_1 q^r < j_2 \leq j_1 q^{r+1}} ~\sum_{\nu = -\infty}^\infty \frac{\gamma_{j_1,j_2,\nu} K^2}{j_1 j_2},
\end{equation}
where $q$ is the growth factor from~\eqref{had}. From the definition of the numbers $\gamma_{j_1,j_2,\nu}$ for any fixed $j_1$ and $r$ we necessarily have
\begin{equation} \label{sum1}
\sum_{j_1 q^r < j_2 \leq j_1 q^{r+1}} ~\sum_{\nu = -\infty}^\infty \gamma_{j_1,j_2,\nu} \leq 1.
\end{equation}
In fact, assume that there exist $j_1$ and $r$ such that~\eqref{sum1} does not hold; then there must exist \emph{finitely} many triplets $(j_1,j_2^{(i)},\nu^{(i)})$ with $j_1 q^r < j_2^{(i)} \leq j_1 q^{r+1}$ and $\nu^{(i)} \in \Z$ such that
\begin{equation} \label{sum2}
\sum_i \gamma_{j_1,j_2^{(i)},\nu^{(i)}} > 1.
\end{equation}
By assumption we have $n_{k_1} / n_{k_2} \not\in (q^{-1},q)$, for $k_1 \neq k_2$. This means that for sufficiently large $k_1$ it is not possible that there exist numbers $j_1,j_2,j_3$ satisfying $j_1 q^r < j_2,j_3 \leq j_1 q^{r+1}$ together with different indices $k_2,k_3$ and numbers $\nu_1,\nu_2$ from the \emph{finite} set $\bigcup_i \nu^{(i)}$ such that both equalities 
$$
j_1 n_{k_1} - j_2 n_{k_2} = \nu_1 \qquad \textrm{and} \qquad j_1 n_{k_1} - j_3 n_{k_3} = \nu_2
$$
hold. In view of the definition of the numbers $\gamma$ this implies that actually 
$$
\sum_i \gamma_{j_1,j_2^{(i)},\nu^{(i)}} \leq 1,
$$
which is in contradiction with~\eqref{sum2}. Consequently,~\eqref{sum1} must be true, and by~\eqref{rhsest2} and~\eqref{sum1} we get the upper bound
$$
\frac{4}{\pi^2} \sum_{j_1=1}^\infty \sum_{i=0}^\infty \frac{K^2}{j_1^2 q^{i}} = \frac{2 K^2q}{3 (q-1)},
$$
which by our previous consideration is also an upper bound for the left-hand side of~\eqref{lemmalhs}. This proves the Lemma.
\end{proof}

\begin{proof}[Proof of Lemma~\ref{modkoksma}]
Let $f$ and $x_1, \dots, x_N \in [0,1]$ be given. We define points $\bar{x}_1, \dots, \bar{x}_N$ by
$$
\bar{x}_k = \left\{ \begin{array}{ll} x_k & \textrm{if $x_k \leq 1/2$} \\ 1-x_k & \textrm{if $x_k > 1/2$,} \end{array} \right.
$$
for $1 \leq k \leq N$. Then clearly all points $\bar{x}_1, \dots, \bar{x}_N$ lie in the interval $[0,1/2]$, and by the symmetry of $f$ we have $f(x_k)=f(\bar{x}_k),~1 \leq k \leq N$. Also by the symmetry of $f$ we have
\begin{eqnarray}
& & \left| \int_0^1 f(x) ~dx - \frac{1}{N} \sum_{k=1}^N f(x_k) \right| \nonumber\\
& = & \left| \int_0^{1} f(x/2) ~dx - \frac{1}{N} \sum_{k=1}^N f(2\bar{x}_k/2) \right| \nonumber\\
& \leq & \var f(x/2)~ D_N^*(2 \bar{x}_1, \dots, 2 \bar{x}_N), \label{koksequ}
\end{eqnarray}
where the last line follows from Koksma's inequality for the function $f(x/2)$. Note that by the symmetry of $f$ we have 
\begin{equation} \label{var2}
\var f(x/2) = \left(\var f(x)\right)/2.
\end{equation}
For any $a \in [0,1]$ and any $k$ we have
$$
2 \bar{x}_k \in [1-a,1] \qquad \textrm{if and only if} \qquad x_k \in [1/2-a/2,1/2+a/2].
$$
Thus we have
\begin{eqnarray*}
D_N^*(2 \bar{x}_1, \dots, 2 \bar{x}_N) & = & \sup_{0 \leq a \leq 1} \left| \frac{1}{N} \sum_{k=1}^N \mathds{1}_{[1-a,1]} (2 \bar{x}_k) - a \right| \\
& = & \sup_{0 \leq a \leq 1} \left| \frac{1}{N} \sum_{k=1}^N \mathds{1}_{[1/2-a/2,1/2+a/2]} (x_k) -a \right| \\
& \leq & D_N (x_1, \dots, x_N).
\end{eqnarray*}
Together with~\eqref{koksequ} and~\eqref{var2} this proves the lemma.
\end{proof}

\section{Proofs}

\begin{proof}[Proof of Theorem~\ref{th1}:~]
Let $[0,a] \subset [0,1]$ be given. Then the Fourier series of the centered indicator function $\mathbf{I}_{[0,a]}(x)$ is given by
$$
\mathbf{I}_{[0,a]} (x) \sim \sum_{j=1}^\infty  \underbrace{\frac{\sin 2 \pi j a}{\pi j}}_{=:~a_j ~= ~a_j(a)} \cos 2 \pi j x  + \underbrace{\left( \frac{1-\cos 2 \pi j a}{\pi j} \right)}_{=:~b_j ~= ~b_j (a)} \sin {2 \pi j x}.
$$
By Lemma~\ref{lemma1} we have, for the sequence $(n_k)_{k \geq 1}$ defined in Theorem~\ref{th1}, that for $1 \leq j_2 \leq j_1$
$$
\gamma_{j_1,j_2,\nu} = \left\{ \begin{array}{ll} \frac{1}{2} & \textrm{if $j_1 = 3 j_2$ and $j_2 = \nu$},\\
0 & \textrm{otherwise}, \end{array} \right.
$$
Note that $S(j_1,j_2,\nu,N) = S(j_2,j_1,-\nu,N)$. Using this relation Lemma~\ref{lemma1} can also be used to calculate $\gamma_{j_1,j_2,\nu}$ in the case $j_2 \geq j_1$, and we have 
$$
\gamma_{j_1,j_2,\nu}=\gamma_{j_2,j_1,-\nu}.
$$

Thus according to formula~\eqref{sigma}, for the function $\sigma^2_{[0,a]}(x)$ for $\mathbf{I}_{[0,a]}(x)$ we have
\begin{eqnarray}
\sigma^2_{[0,a]} (x) & = & a (1-a) + \sum_{j=1}^\infty \frac{1}{4} \Big( \left(a_{3j} a_{j} + b_{3 j} b_{j} \right) \cos 2 \pi j x + \left(b_{3j}a_{j} - a_{3j} b_{j}\right) \sin 2 \pi j x \Big) \nonumber\\
& & + \sum_{j=1}^\infty \frac{1}{4} \Big( \left(a_{j} a_{3j} + b_{j} b_{3j} \right) \cos 2 \pi (-j) x + \left(b_{j}a_{3j} - a_{j} b_{3j}\right) \sin 2 \pi (-j) x \Big) \nonumber\\
& = & a (1-a)  + \sum_{j=1}^\infty \frac{1}{2} \Big( \left(a_{3j} a_{j} + b_{3 j} b_{j} \right) \cos 2 \pi j x + \left(b_{3j}a_{j} - a_{3j} b_{j}\right) \sin 2 \pi j x \Big).  \label{sigmaex}
\end{eqnarray}
For the Fourier coefficients $a_j,b_j$ we have the relation
\begin{equation} \label{fourrel}
a_j(1-a) = -a_j(a) \qquad \textrm{and} \qquad b_j(1-a) = b_j(a).
\end{equation}
The convolution theorem of Fourier analysis states in its real form that for two functions $g,h$ given by
$$
g(x) \sim \sum_{j=1}^\infty c_j \cos 2 \pi j x+ d_j \sin 2 \pi j x, \qquad h(x) \sim \sum_{j=1}^\infty \bar{c}_j \cos 2 \pi j x+ \bar{d}_j \sin 2 \pi j x,
$$
the function
$$
\frac{1}{2} \sum_{j=1}^\infty \left(c_j \bar{c}_j - d_j \bar{d}_j\right) \cos 2 \pi j x + \left(c_j \bar{d}_j + \bar{c}_j d_j\right) \sin 2 \pi j x
$$
is the Fourier series of
$$
\int_0^1 f(t) g(x-t) ~dt.
$$
Let $g(x)= -\mathbf{I}_{[0,1-a]}$ and $h(x)=\mathbf{I}_{[0,\{3 a\}]}$. Then by~\eqref{fourrel} the Fourier coefficients $c_j,d_j$ of $g$ satisfy
\begin{equation} \label{fou1}
c_j = a_j, \qquad d_j = - b_j, \qquad j \geq 1,
\end{equation}
and the Fourier coefficients $\bar{c}_j,\bar{d}_j$ of $h$ satisfy
\begin{equation} \label{fou2}
\bar{c}_j = 3 a_{3j}, \qquad \bar{d}_j = 3 b_{3j}, \qquad j \geq 1.
\end{equation}
Using the convolution theorem for the functions $g$ and $h$ defined in this way, and comparing the Fourier coefficients in~\eqref{fou1},~\eqref{fou2} with those in equation~\eqref{sigmaex}, we get the expression
\begin{eqnarray*}
\sigma^2_{[0,a]}(x) & = & a(1-a) - \frac{1}{3} \int_0^1 \mathbf{I}_{[0,1-a]} (t) ~\mathbf{I}_{[0,\{ 3 a \}]}(x-t)~dt \\
& =  & a(1-a) + \frac{(1-a)\{3a\}}{3} - \frac{1}{3} \int_0^1 \mathds{1}_{[0,1-a]} (t) ~\mathds{1}_{[0,\{ 3 a \}]}(\{x-t\})~dt.
\end{eqnarray*}
for the function $\sigma^2_{[0,a]}(x)$. Using this formula for $a \in [2/3,1]$ we see that for all $x$ we have
$$
\sigma^2_{[0,a]}(x) \leq a(1-a) + \frac{(1-a)\{3a\}}{3} = a(1-a) + \frac{(1-a)(3a-2)}{3} = - 2a^2+\frac{8a}{3}-\frac{2}{3}.
$$
It is easily seen that
$$
\max_{a \in [2/3,1]} \left( - 2a^2+\frac{8a}{3}-\frac{2}{3} \right) = \frac{2}{9},
$$
and thus
\begin{equation} \label{sigma1}
\max_{a \in [2/3,1]} \sigma_{[0,a]}^2(x) \leq \frac{2}{9} \qquad \textrm{for all $x \in [0,1]$}.
\end{equation}
Furthermore, if $a \in [0,1/3]$, then the length of $[0,1-a]$ plus the length of $[0,\{3a\}]=[0,3a]$ is $1+2a$; consequently, even if the second interval is translated (mod 1), there must be an overlap of length at least $2a$. Thus we have
\begin{equation} \label{sigma2}
\sigma^2_{[0,a]} (x) \leq a(1-a) + \frac{(1-a)3a}{3} - \frac{2a}{3} \leq \frac{2}{9} \qquad \textrm{for all $a \in [0,1/3]$ and $x \in [0,1]$},
\end{equation}
where the last inequality again follows by standard methods. The only complicated case is when $a \in [1/3,2/3]$. In this case we have
\begin{equation} \label{integ}
\sigma^2_{[0,a]} (x) = a(1-a) + \frac{(1-a)(3a-1)}{3} - \frac{1}{3} \underbrace{\int_0^1 \mathds{1}_{[0,1-a]} (t) ~\mathds{1}_{[0,3a-1]}(\{x-t\})~dt}_{=: I_1}.
\end{equation}
If we assume that $0 \leq x \leq 1-a$, then we have the following values for the integral $I_1$ in~\eqref{integ}:\\

\begin{tabular}{|l|l|}
\hline
Additional assumption & Value of $I_1$ \\
\hline
If $3a - 1 \leq x$ & $3a-1$ \\
\hline 
If $x \leq 3a-1 \leq x + a$ & $x$ \\
\hline
If $x + a \leq 3a-1$ & $2a-1$ \\
\hline
\end{tabular}\\
Thus we can explicitly describe the values of $\sigma_{[0,a]}(x)$ for all $a \in [1/3,2/3]$ and $x \in [0,1]$ satisfying $0 \leq x \leq 1-a$. We note that the function $\sigma_{[0,a]}(x)$ satisfies the relation $\sigma_{[0,a]}(x)=\sigma_{[0,1-a]}(1-x)$; this can be seen for example from~\eqref{sigmaex} and~\eqref{fourrel}. Consequently the values of $\sigma_{[0,a]}(x)$ under the additional assumption $1-a \leq x \leq 1$ can be obtained by transforming the corresponding cases of $0 \leq x \leq 1-a$. Overall, we have a full explicit description of $\sigma_{[0,a]}(x)$ for all values $a \in [1/3,2/3]$ and $x \in [0,1]$. Consequently, to calculate $\max_{a \in [1/3,2/3]} \sigma^2_{[0,a]}(x) $ it remains to solve a (simple, but quite laborious) maximization problem with several different cases. Using standard methods, it can be shown that for given $x \in [0,1]$ the largest possible value of $\sigma^2_{[0,a]}(x)$ (for some appropriate $a$, depending on $x$) is 
$$
\max_{a \in [1/3,2/3]} \sigma^2_{[0,a]}(x) = \left\{ \begin{array}{ll} \frac{-3x^2-x+2}{6} & \textrm{if $0 \leq x \leq \frac{1}{6},$} \\ 
\frac{-24x+25}{72} & \textrm{if $\frac{1}{6} \leq x \leq \frac{3}{8},$} \\ \frac{2}{9} & \textrm{if $\frac{3}{8} \leq x \leq \frac{1}{2},$} \\ \max_{a \in [1/3,2/3]} ~\sigma^2_{[0,a]}(1-x) & \textrm{if $\frac{1}{2} < x \leq 1$} \end{array} \right.
$$
Together with~\eqref{discrv},~\eqref{sigma1} and~\eqref{sigma2} this proves the theorem.
\end{proof}

\begin{proof}[Proof of Theorem~\ref{th2}:~]
Suppose that $(n_k)_{k \geq 1}$ satisfies the Diophantine condition $\mathbf{D}$. Then by~\eqref{discrv} we have
\begin{equation} \label{lambdalo}
\Lambda^*(x) = \max_{a \in [0,1]} \sigma_{[0,a]} (x) \geq \sigma_{[0,1/2]} (x)
\end{equation}
for almost all $x$, where $\sigma_{[0,1/2]}(x)$ is defined according to~\eqref{sigma} for the function
$$
f(x) = \mathbf{I}_{[0,1/2]}(x).
$$ 
The Fourier series of $f$ is given by
$$
f(x) \sim \sum_{j=1}^\infty  \underbrace{\frac{\sin \pi j}{\pi j}}_{=:~a_j} \cos 2 \pi j x  + \underbrace{\left( \frac{1-\cos \pi j}{\pi j} \right)}_{=:~b_j} \sin {2 \pi j x}.
$$
Obviously $a_j =0$ for all $j$, while $b_j=2/(\pi j)$ for odd $j$ and $b_j=0$ for even $j$. Thus for this function $f$ the formula~\eqref{sigma} can be reduced to
$$
\sigma_f^2 (x) = \|f\|^2 + \sum_{\nu = -\infty}^\infty \sum_{j_1,j_2=1}^\infty \frac{\gamma_{j_1,j_2,\nu}}{2} b_{j_1} b_{j_2} \cos 2 \pi \nu x.
$$
Since all coefficients $b_j$ are non-negative, clearly also all products $b_{j_1} b_{j_2}$ must be non-negative, as are the numbers $\gamma_{j_1,j_2,\nu}$. Thus we have
$$
\int_0^1 \sigma_f^2 (x) ~dx = \|f\|^2 + \sum_{j_1,j_2=1}^\infty \frac{\gamma_{j_1,j_2,0}}{2} b_{j_1} b_{j_2} \geq \|f\|^2 = \frac{1}{4}.
$$
Together with~\eqref{lambdalo} this yields
$$
\|\Lambda^*\| \geq \frac{1}{2},
$$
which proves the first part of the theorem.\\

To prove the second part, we will show that for any fixed $x \in [0,1]$ we have
\begin{equation} \label{formu}
\int_0^{1/2} \sigma_{[a,a+1/2]}^2(x) ~da = \frac{1}{8}.
\end{equation}
This implies that for any $x$ there exists an interval $[a,a+1/2]$ such that 
$$
\sigma_{[a,a+1/2]}^2(x) \geq \frac{1}{4},
$$
which together with~\eqref{discrv} proves the second statement of Theorem~\ref{th2}.\\

To prove~\eqref{formu}, we note that for the Fourier coefficients of the function $\mathbf{I}_{[a,a+1/2]}(x)$ for some $a \in [0,1/2]$, given by
$$
\mathbf{I}_{[a,a+1/2]}(x) \sim \sum_{j=1}^\infty a_j \cos 2 \pi j x  + b_j \sin {2 \pi j x},
$$
we have
$$
a_j = a_j (a) = \frac{\sin 2 \pi j (a+1/2) - \sin 2\pi j a}{\pi j} = \left\{ \begin{array}{ll} -\frac{2 \sin 2 \pi j a}{j \pi} & \textrm{if $j$ is odd,}\\0 & \textrm{if $j$ is even} \end{array} \right.
$$
and
$$
b_j = b_j(a) = \frac{-\cos 2 \pi j (a+1/2) + \cos 2 \pi j a}{\pi j} = \left\{ \begin{array}{ll} \frac{2 \cos 2 \pi j a}{j \pi} & \textrm{if $j$ is odd,}\\0 & \textrm{if $j$ is even.} \end{array} \right.
$$
Note that the growth condition~\eqref{had} implies that $\gamma_{j_1,j_2,\nu}=0$ whenever $j_1 = j_2$. Thus according to~\eqref{sigma} we have
\begin{equation} \label{sigmasum}
\sigma_{[a,a+1/2]} (x) = \frac{1}{4} + \sum_{\nu = -\infty}^\infty \sum_{\substack{j_1 \neq j_2,\\j_1 \textrm{~and~}j_2 \textrm{~odd}}} \frac{\gamma_{j_1,j_2,\nu}}{2} \Big( \left(a_{j_1} a_{j_2} + b_{j_1} b_{j_2} \right) \cos 2 \pi \nu x + \left(b_{j_1}a_{j_2} - a_{j_1} b_{j_2}\right) \sin 2 \pi \nu x \Big)
\end{equation}
We have
\begin{eqnarray*}
& & \int_0^{1/2} \sum_{\nu = -\infty}^\infty \sum_{\substack{j_1 \neq j_2,\\j_1 \textrm{~and~}j_2 \textrm{~odd}}} \frac{\gamma_{j_1,j_2,\nu}}{2} a_{j_1} a_{j_2} \cos 2 \pi \nu x ~da \\
& = & \int_0^{1/2} \sum_{\nu = -\infty}^\infty \sum_{\substack{j_1 \neq j_2,\\j_1 \textrm{~and~}j_2 \textrm{~odd}}} 2 \gamma_{j_1,j_2,\nu}  \frac{\sin 2 \pi j_1 a}{j_1 \pi} \frac{\sin 2 \pi j_2 a}{j_2 \pi}  \cos 2 \pi \nu x ~da \\
& = & \int_0^{1/2} \sum_{\nu = -\infty}^\infty \sum_{\substack{j_1 \neq j_2,\\j_1 \textrm{~and~}j_2 \textrm{~odd}}} \gamma_{j_1,j_2,\nu}  \frac{\cos (2 \pi (j_1-j_2)a) - \cos (2 \pi (j_1+j_2)a)}{j_1 j_2 \pi^2} \cos 2 \pi \nu x ~da \\
& = & \sum_{\nu = -\infty}^\infty \sum_{\substack{j_1 \neq j_2,\\j_1 \textrm{~and~}j_2 \textrm{~odd}}}  \int_0^{1/2} \gamma_{j_1,j_2,\nu}  \frac{\cos (2 \pi (j_1-j_2)a) - \cos (2 \pi (j_1+j_2)a)}{j_1 j_2 \pi^2} \cos 2 \pi \nu x ~da \\
& = & 0.
\end{eqnarray*}
Here we used the fact that when $j_1$ and $j_2$ are both odd and $j_1 \neq j_2$, then both $j_1+j_2$ and $j_1-j_2$ are even and nonzero, and consequently all the integrals $\int_0^{1/2} \cos (2 \pi (j_1-j_2)a) ~da$ and $\int_0^{1/2} \cos (2 \pi (j_1+j_2)a) ~da$ are zero; furthermore, we used Lemma~\ref{lemmagamma} and the dominated convergence theorem to exchange the order of summation and integration. The other parts of the sum in~\eqref{sigmasum} can be treated in a similar way, and it turns out that their integrals are also equal to zero. Overall we get
$$
\int_0^{1/2} \sigma_{[a,a+1/2]} (x)~da =  \int_0^{1/2} \frac{1}{4}~da = \frac{1}{8},
$$
which is~\eqref{formu}. This proves the theorem.
\end{proof}

\begin{proof}[Proof of Theorem~\ref{th3}:~]
Theorem~\ref{th3} is a simple consequence of~\eqref{coslil} and Lemma~\ref{modkoksma} with $f(x)=\cos 2\pi x$.
\end{proof}

\begin{proof}[Proof of Theorem~\ref{th4}:~]
To prove the theorem, it is sufficient to show that for any distinct positive integers $m,n$ we have
$$
\int_0^1 \int_0^1 \mathbf{I}_{[a,a+z]} (m x) \mathbf{I}_{[a,a+z]} (n x) ~dx ~da= 0.
$$
For this purpose, we set
$$
g(a) = \int_0^1  \mathbf{I}_{[a,a+z]} (m x) \mathbf{I}_{[a,a+z]} (n x) ~dx.
$$
It is easily seen that the function $g$ has period 1, that is $g(a+1)=g(a)$. By noting that $\mathbf{I}_{[a,a+z]}(y) = \mathbf{I}_{[0,z]} (y-a)$ and by changing the variable $x$ to $x+a/m$ we have
\begin{eqnarray*}
g(a) & = & \int_0^1 \mathbf{I}_{[0,z]}(mx-a) \mathbf{I}_{[0,z]}(nx-a) ~dx \\
& = & \int_0^1 \mathbf{I}_{[0,z]}(mx) \mathbf{I}_{[0,z]}(nx+(n/m-1)a) ~dx
\end{eqnarray*}
(since the integrand has period 1, the integration interval does not change). Since $g$ has period 1, we have
\begin{eqnarray*}
\int_0^1 g(a) ~da & = & \frac{1}{m} \int_0^m g(a)~da\\
& = & \frac{1}{m} \int_0^1 \mathbf{I}_{[0,z]}(mx) \int_0^m \mathbf{I}_{[0,z]} (nx + (n/m-1) a)~da ~dx \\
& = & 0.
\end{eqnarray*}
This proves the theorem.
\end{proof}


\end{document}